\documentclass[a4paper,12pt]{article}
\usepackage[usenames,dvipsnames]{pstricks}
 \usepackage{epsfig}
 \usepackage{pst-grad} 
 \usepackage{pst-plot} 

\RequirePackage[OT1]{fontenc}
\RequirePackage{amsthm,amsmath}
\RequirePackage[numbers]{natbib}
\RequirePackage[colorlinks,citecolor=blue,urlcolor=blue]{hyperref}
\usepackage{amsmath,amsthm,amsfonts,amssymb}


%
\numberwithin{equation}{section}
 \theoremstyle{plain} 
\newtheorem{thm}{Theorem}
\newtheorem{conj}{Conjecture}
\newtheorem{cor}[thm]{Corollary}

\newtheorem{prop}{Proposition}

\newtheorem{ass}{Assumption}
\theoremstyle{definition}

\theoremstyle{remark}
\newtheorem{rem}{Remark}[section]


\def\orig{o}

\newcommand{\G}{\mathcal{G}}

\newcommand{\Tr}{\mathtt{Tr}}

\newcommand{\N}{\mathbb{N}}
\newcommand{\T}{\mathbb{T}}
\newcommand{\1}{\textbf{1}}
\newcommand{\ro}{\textbf{r}}
\renewcommand{\P}{\mathbb{P}}

\newcommand{\E}{\mathbb{E}}


\newcommand{\GW}{\textbf{GW}}

\newcommand{\UGW}{\textbf{UGW}}


\newcommand{\eval}[2][\right]{\relax
  \ifx#1\right\relax \left.\fi#2#1\rvert}



\setcounter{footnote}{0}
\begin{document}

\title{Ends of branching random walks on planar hyperbolic Cayley graphs}
\author{Lorenz A.~Gilch and {Sebastian}  {M\"uller}}
\maketitle



 

 {\abstract We prove that the trace of a transient branching random walk on a planar hyperbolic Cayley graph has a.s.~continuum many ends and no isolated end.}
 ~\\~
 \newline {\scshape Keywords:} branching random walk, hyperbolic groups
\newline {\scshape AMS 2000 Mathematics Subject Classification:} 60J10, 60J80, 05C80

 

\section{Introduction}\label{sec:intro}

A \emph{branching random walk} (BRW) is a growing cloud of particles on some graph $G$ in discrete time. The process starts with one particle in the root $\orig$ of the graph. At each time step each particle splits into offspring particles which then move one step according to a random walk on $G$. Particles branch and move independently of the other particles and the history of the process. 
A BRW is therefore driven by two classical stochastic processes: Galton-Watson processes and random walks. Under the assumption that the underlying Galton-Watson process survives the number of particles grows exponentially. If the return probabilities of the underlying random walk decay subexponentially the effect of the growing particles overshadows the transience of the spatial dynamic and the underlying graph will eventually be full of particles. However, if  the return probabilities of  the random walk decay exponentially as well, then there is a critical  growth rate of the Galton-Watson process  where the two exponential effects cancel out.
While above the critical value the BRW is again \emph{recurrent}, i.e.~every finite set is visited infinitely many times, below and at the critical value every finite set is eventually free of particles and the BRW is called \emph{transient}. In the transient case, the set of visited vertices and traversed edges defines a proper random subgraph of $G$ and its properties become of interest. This subgraph is called the \emph{trace} $\Tr$ of the BRW.

Let  $\mu=(\mu_k)_{k\geq 0}$ be the probability distribution that describes  the branching, i.e.~each particle produces $k$ offspring with probability $\mu_{k}$. 
The expected number of offspring is denoted by $m=\sum_{k\geq 0} k \mu_{k}$. 
In this note we assume the underlying graph $G=G(\Gamma,S)$ to be the Cayley graph of a finitely generated group $\Gamma$ with respect to a finite symmetric generating set $S$. The movement of the particles is driven by a driving measure $q$ on $S\cup \{e\}$, where $e$ is the group identity. The driving measure $q$ defines a transition kernel $P$ by $p(x,y)=q(x^{-1} y)$ for all $x,y\in \Gamma$.

We make the following standing assumptions.
\begin{ass}\label{ass}~
\begin{itemize}
\item The underlying Galton-Watson process is supercritical, i.e.~$m>1.$  Furthermore, we assume that $\mu_{0}=0$ and $\mu_{1}>0$.
\item  The driving measure $q$ of the random walk on $G$ is symmetric, i.e.~$q(s)=q(s^{-1})$ for all $s\in S$, and satisfies $supp(q)=S\cup \{e\}$.
\end{itemize}
\end{ass}
These assumptions are to some extend chosen to improve the presentation. 
The  assumptions that are really necessary are that $m>1$ and that the driving measure $q$ is symmetric. 
 
 The spectral radius, $\rho=\rho(P)=\limsup_{n\to\infty} (p^{(n)}(e,e))^{1/n}$, is a crucial quantity in the study of BRWs:  a BRW on a Cayley graph is transient  if and only if $m\rho\leq 1$. This is a consequence of the classification of recurrent groups and Kesten's amenability criterion,  see also \cite{gantert:04} for an alternative proof. We speak of a critical BRW if $m \rho=1$.

It was shown in \cite{benjamini:11} that the trace of a transient BRW on a Cayley graph $G$ a.s.~is transient for simple random walk but recurrent for BRW. Therefore, the trace of a transient BRW is a.s.~a proper subgraph of $G$. It is believed that the trace shares many properties with infinite percolation clusters in the non-unicity phase.
In the case of BRWs on free groups (or regular trees) it even turns out that the law of the trace of a BRW is the law of an infinite cluster of some invariant percolation, see \cite{BLS:12, benjamini:11}. However, the situation is not as clear for other Cayley graphs, especially one-ended Cayley graphs.  

This note is devoted to the following property of invariant percolation and its analogue for the trace of BRWs.
For every invariant weakly insertion-tolerant percolation process on a non-amenable quasi-transitive unimodular graph $G$ that has a.s.~infinitely many clusters, we have that a.s.~every infinite cluster has continuum many ends, no isolated end, and is transient for simple random walk, see Theorem 8.32  in \cite{lyons:book}. 
 
On groups with infinitely many ends, there are various ways to see that the trace has infinitely many ends. Even the Hausdorff dimension can be calculated, see \cite{hueter:00} for free groups and  \cite{CGM:12} for free products of groups. Note that their approach seems to carry over, using Stalling's splitting theorem, to all groups with infinitely many ends. 

As pointed out in \cite{CaRo:14} there is an elegant argument using symmetry that 
the trace of a subcritical BRW, i.e.~$m<1/\rho$, has infinitely many ends. This argument extends to the critical case if $\sum_{n\geq 1} n m^{n} p^{(n)}(e, e)<\infty$. However, this criterion does not apply to the most interesting Cayley graphs like Gromov hyperbolic groups and the following conjecture remains open.

\begin{conj}[I.~Benjamini]
Let $G$ be any non-amenable vertex transitive graph and assume the transition kernel $P$ of the underlying
random walk  to be symmetric.
 Then the trace of a transient BRW has infinitely many ends.  
\end{conj}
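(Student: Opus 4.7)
The plan is to analyse how the trace accumulates at a geometric boundary of $G$, and then to convert ``many accumulation points at infinity'' into ``many ends of $\Tr$ as a subgraph''. Since $G$ is non-amenable one has spectral radius $\rho<1$, so transience of the BRW is compatible with survival of the Galton--Watson tree ($m\rho\le 1$ together with $m>1$); on the event of survival, infinitely many disjoint lineages escape every finite set. I would equip $G$ with a suitable compactification $\overline G$ (the end compactification in general; the Gromov boundary $\partial G$ in the hyperbolic setting treated later in the paper). The symmetry of $q$, combined with non-amenability, should imply, via a Furstenberg-type argument applied to each lineage, that every individual particle lineage converges almost surely to a point of $\overline G\setminus G$ whose law is non-atomic.

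Next, I would use the branching structure of the BRW together with this directional convergence to show that the limit set $\Lambda\subset\overline G\setminus G$ of $\Tr$ is almost surely infinite. Independence of disjoint subtrees of the Galton--Watson tree gives independent samples from the non-atomic directional law, so $\Lambda$ is uncountable on survival. A tail $0$--$1$ law along the branching tree, combined with the $\Gamma$-invariance of the law of $\Tr$ rooted at a typical visited vertex, rules out $\Lambda$ being almost surely finite.

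The delicate step, and the main obstacle, is the third one: translating ``$\Lambda$ is infinite'' into ``$\Tr$ has infinitely many ends''. For every finite $F\subset G$ one must show that two lineages converging to distinct boundary points are eventually in different connected components of $\Tr\setminus F$. A priori, intermediate lineages of the BRW can weave between far-apart escape directions and create shortcuts that collapse many limit points into a single end, and ruling this out is precisely what makes Benjamini's conjecture hard in full generality. In the planar hyperbolic setting addressed by the paper, I would attack this step by exploiting $\delta$-thinness of geodesic triangles together with planarity: any shortcut between two lineages tending to distinct points of $\partial G\cong S^1$ must cross a bounded neighbourhood of the bifurcation point of their geodesics, and a mass-transport argument on the unimodular graph $G$, leveraging symmetry of $q$, should show that infinitely many such crossings would transport infinite mass out of the typical vertex, contradicting the finite expected degree. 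Without planarity or some comparable geometric separation, however, I do not see how to prevent the shortcut scenario, and this is the step I expect to be the essential obstacle.
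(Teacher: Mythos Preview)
The statement you are addressing is recorded in the paper as an \emph{open conjecture} of Benjamini; the paper does not claim a proof in this generality, and none is given. What the paper proves is the special case of planar hyperbolic Cayley graphs (Theorem~\ref{thm:main}). Your proposal correctly isolates the obstruction in the general case---the ``shortcut'' step of converting many boundary accumulation points into many ends of the subgraph $\Tr$---and this is indeed precisely why the conjecture remains open.

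For the planar hyperbolic case your sketch and the paper's argument diverge substantially. You propose to build an uncountable limit set $\Lambda\subset\partial G$ from non-atomicity of the harmonic measure together with branching independence, and then to rule out shortcuts by a mass-transport bound on crossings near geodesic bifurcation points. The paper does \emph{not} go via limit sets. Its proof has two ingredients. First (Proposition~\ref{prop:atleast3}), a direct MTP argument on the unimodular law of $[\Tr,\orig]$ shows that almost surely any trace with at least three ends has no isolated end: each vertex sends unit mass to the nearest point of the trifurcation set $A_n$, and an isolated end would force a finite cutset to receive infinite mass. Second (Proposition~\ref{prop:noisolatedend}), one assumes $\P[\Tr\text{ has an isolated end}]>0$ and derives a contradiction: pick three geodesic rays from $\orig$ cutting the plane into three sectors, choose points $x_1,x_2,x_3$ at distance $K$ from the geodesic fan $\gamma$ inside each sector, and condition the UBRW to have exactly one particle at each $x_i$ at a fixed time. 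Gou\"ezel's Ancona inequality at the spectral radius yields the exponential decay \eqref{eq:expdecay} of $G_R$, whence $\E[|\Tr_i\cap\gamma|]\to 0$ as $K\to\infty$; thus with positive probability each $\Tr_i$ avoids $\gamma$ and, by planarity, stays in its own sector. This produces at least three ends, one of them isolated by hypothesis, contradicting Proposition~\ref{prop:atleast3}.

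In short, the paper sidesteps the limit-set-to-ends translation entirely: instead of proving that distinct points of $\partial G$ yield distinct ends of $\Tr$, it directly manufactures three separated sub-traces via a first-moment Green-function estimate plus planarity, and then invokes the MTP dichotomy ``$\geq 3$ ends $\Rightarrow$ no isolated end''. Your mass-transport-on-crossings idea is reasonable in spirit, but it is not the mechanism used here, and as stated it would still need a quantitative input comparable to \eqref{eq:expdecay} to control how often lineages revisit the separating geodesics.
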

\begin{rem}
The assumption of symmetry is crucial, since there are non-symmetric driving measures that induce one-ended traces, see \cite{CaRo:14}.
\end{rem}
We answer this conjecture affirmative for BRWs on planar hyperbolic Cayley graphs. 
\begin{thm}\label{thm:main}
Assume Assumption \ref{ass} holds. The trace of a transient BRW on a planar hyperbolic Cayley graph has a.s.~continuum many ends and no isolated end.
\end{thm}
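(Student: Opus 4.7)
The plan is to work with the \emph{limit set} $\Lambda\subseteq\partial G$ of the BRW on the hyperbolic boundary and to use the planar embedding to convert topological features of $\Lambda$ (cardinality of its complement, absence of isolated points) into corresponding features of the set of ends of $\Tr$.

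First, I would construct $\Lambda$ and study its topology. Condition on survival of the Galton--Watson tree. Each infinite line of descent produces a random walk trajectory on $G$, which by transience of the BRW together with non-amenability and Gromov hyperbolicity of $G$ converges a.s.~in the hyperbolic compactification to a point of $\partial G$. Taking the closure of all these limit points yields the random closed set $\Lambda$. The main assertion at this stage is that $\Lambda$ is a.s.~a perfect set of cardinality continuum. For continuum many points one combines the fact that the boundary of a supercritical Galton--Watson tree is a.s.~uncountable with a two-point decoupling lemma: two lines of descent that have separated by a sufficiently large distance in $G$ have, with positive probability, distinct limits on $\partial G$ (this is standard once one has sharp estimates on hitting probabilities of Gromov shadows for the symmetric random walk). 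Perfectness is a localized version of the same argument applied around every branching point.

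Second, I would convert $\Lambda$ into ends of $\Tr$ via planarity. Fix a planar embedding of the Cayley graph compatible with its hyperbolic structure; for one-ended planar hyperbolic groups $\partial G$ is topologically a circle, while the multi-ended case is already handled by the methods referenced in \cite{hueter:00, CGM:12} (via Stallings' decomposition), so the heart of the matter is the one-ended circular case. For a planar subgraph $H\subset G$, the ends of $H$ correspond bijectively to pairs consisting of a connected component of $\partial G\setminus\overline{\Lambda(H)}$ and a side of $H$. Consequently, it suffices to show that $\partial G\setminus\Lambda$ has continuum many connected components and that each of its endpoints is a non-isolated point of $\Lambda$. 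Existence of at least one gap is provided by \cite{benjamini:11}: $\Tr$ is a.s.~a proper subgraph of $G$, so the complement $G\setminus\Tr$ has an infinite component which, read in the planar embedding, witnesses an open arc of $\partial G$ disjoint from $\Lambda$. To promote a single gap to continuum many, I would combine $\Gamma$-invariance of the law of $\Tr$ with ergodicity of the $\Gamma$-action on the boundary: translates of a single gap produce a dense set of disjoint gaps, and closedness of $\Lambda$ then forces uncountably many. The no-isolated-end conclusion is then automatic from perfectness of $\Lambda$, since an isolated end of $\Tr$ would correspond to an isolated endpoint of a gap of $\Lambda$.

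The principal obstacle is the matching between gaps of $\Lambda$ on $\partial G$ and ends of $\Tr$ in the combinatorial sense, i.e.~proving that two distinct gaps are actually separated by a finite vertex cut inside $\Tr$. Gromov hyperbolicity is the essential tool: a bounded neighborhood of a bi-infinite geodesic joining two points lying in distinct gaps meets $\Tr$ only in a finite set, since trajectories of the BRW converge to $\Lambda$ and thus cross such a separating geodesic only finitely often in expectation. Making this intersection estimate quantitative under the sole assumption $m\rho\le 1$ is where the probabilistic input (transience, symmetry of $q$, exponential decay of return probabilities) must be combined with the geometric input (planar embedding, linear divergence of geodesics on $\partial\mathbb H^2$), and this is the technical core of the argument.
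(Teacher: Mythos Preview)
Your approach via the limit set $\Lambda\subset\partial G$ is genuinely different from the paper's, but it has two concrete gaps that I do not see how to close.

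First, the claimed correspondence between ends of $\Tr$ and connected components of $\partial G\setminus\Lambda$ is false. Take a regular tree embedded in the disc: its ends are in bijection with its limit set (a Cantor set, hence continuum many), while the complement of a Cantor set in the circle has only \emph{countably} many components. So ``ends $\leftrightarrow$ gaps'' cannot be a bijection, and your pipeline from ``perfectness of $\Lambda$'' to ``no isolated end of $\Tr$'' breaks: an isolated end of $\Tr$ need not correspond to an isolated point of $\Lambda$, since two rays representing the same end of $\Tr$ need not converge to the same boundary point, and conversely a non-isolated point of $\Lambda$ may well be the image of an isolated end. You would need to prove a genuine separation statement inside $\Tr$, and that is exactly what the paper does by other means.

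Second, the step ``combine $\Gamma$-invariance of the law of $\Tr$ with ergodicity of the $\Gamma$-action on the boundary'' does not apply. The trace of a BRW started at $\orig$ is \emph{not} a $\Gamma$-invariant percolation on $G$; its law as a random subset is anchored at the origin. What holds (for UBRW) is that the random \emph{rooted} graph $[\Tr,\orig]$ is unimodular, which is a different statement and gives you a Mass-Transport Principle, not a group action you can use to translate a single gap around $\partial G$.

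For comparison, the paper bypasses the limit set entirely. It first proves, using only unimodularity and the MTP, that any trace with at least three ends has no isolated end. Then it argues by contradiction: assuming $\P[\Tr\text{ has an isolated end}]>0$, it splits the plane into three sectors by three geodesic rays from $\orig$, places one particle deep inside each sector (possible since $\mu_1>0$ and $q(e)>0$), and uses the exponential decay of $G_R$ (Gou\"ezel's Ancona inequality at the spectral radius) together with Markov's inequality to ensure that with positive probability the three resulting sub-BRWs never touch the separating geodesics. Planarity then forces at least three ends together with an isolated one, contradicting the MTP proposition. The continuum-many-ends statement follows by a short binary-splitting argument once no isolated end is known. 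Your identification of the crossing estimate as the ``technical core'' is correct, and the paper handles it with the inequality
\[
\P[\Tr_i\cap\gamma\neq\emptyset]\le \sum_{y\in\gamma} G_m(x_i,y)\le \sum_{n\ge K} C_1 C_2\, n\,\varrho^{\,n}\to 0,
\]
but it feeds this into a three-sector contradiction rather than into a global description of $\Lambda$.
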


The proof uses the fact that the trace of a BRW is a unimodular random graph and therefore gives rise to the application of the generalized Mass-Transport principle. 
This property is used to prove that a.s.~every trace that has at least $3$ ends has no isolated end,  see Proposition \ref{prop:atleast3}.   The main issue will be to prove that the trace has a.s.~no isolated end, see Proposition \ref{prop:noisolatedend}, and hence infinitely many ends. The proof of this proposition uses crucially the planarity of the Cayley graph, a recent result in \cite{Go:13} that Ancona's Inequality still holds true for the Green function at the radius of  convergence, and Proposition \ref{prop:atleast3}.

In Section \ref{sec:preliminaries} we give some background on unimodular random graphs (URG) and random walks on hyperbolic groups. We believe that the information given there is sufficient to follow the proof of Theorem \ref{thm:main} in Section \ref{sec:proof}. However, for readers who are not familiar with the concept of URGs and of random walks on hyperbolic groups before, it might be useful to consult some of the references given along Section \ref{sec:preliminaries}.

\section{Preparations}\label{sec:preliminaries}

\subsection{Definition and preliminaries}
We use the standard notation for a locally finite graph $G=(V,E)$: $V$ is the set of vertices, $E$ is the set of edges, and we write $x\sim y$ if $(x,y)\in E$. The distance between two vertices is the length of a shortest path between these vertices and will be denoted by $d(\cdot,\cdot)$. We write $(G,\orig)$ for a rooted graph with root $\orig$. 

Let $\Gamma$ be a finitely generated group with group identity $e$;   group operations are written  multiplicatively. The group $\Gamma$ together with some finite symmetric generating set $S$ induces a Cayley graph $G=G(\Gamma,S)$ whose vertex set equals $\Gamma$ and $x\sim y$ if and only  if $x^{-1}y\in S$. The group identity $e$ of $\Gamma$ will be identified with the root $\orig$ of the Cayley graph $G$.

Let $q$ be a  probability measure on the generating set $S\cup\{e\}$. The corresponding random walk $(S_{n})_{n\geq 0}$ on $G$ is a Markov chain with transition probabilities 
$p(x,y)=q(x^{-1}y)$ for $x,y\in \Gamma$.   Equivalently, the random walk  (starting in $x$) can be described as
$S_n=x X_1\cdots X_n,~n\geq1,$ 
where the $X_i$'s are i.i.d.~random variables with distribution $q$.

Besides the definition of BRWs given in Section \ref{sec:intro} there is another powerful  description of BRWs. This definition is based on the concept of tree-indexed random walks introduced in \cite{benjamini:94}.   Let $(\T,\ro)$ be a rooted infinite tree. The tree-indexed random walk  can be described as a marking  (or  labelling) of the rooted tree $(\T,\ro)$. For any vertex $v\neq \ro$ denote by $v^{-}$ the neighbour of $v$ closest to $\ro$. Label the edges of $\T$ with i.i.d.~random variables $X_v$'s with distribution $q$; the random variable $X_v$ is the label of the edge $(v^-,v)$. These labels correspond to the steps of the tree-indexed walk and  the positions of ``particles'' are given by  $S_v=x \cdot \prod_{i=1}^{n} X_{v_i}$ where $\langle v_0=\ro, v_1, \ldots, v_n=v\rangle$ is the unique geodesic from $\ro $ to $v$ at level $n$.

A tree-indexed random walk becomes a BRW if the underlying tree $\T$ is a realization of a Galton-Watson process. We call  $\T$ the family tree  of the BRW.  

\subsection{Unimodular random graphs}
In this note we only give the essentials needed for our proofs. We invite the reader to consult \cite{benjamini:11, Mueller:14} for more details on the connection between BRWs and unimodular random graphs, and \cite{AL:07} for a more general introduction to the concept of URGs.


 A rooted isomorphism between two rooted graphs $(G,\orig)$ and $(G',\orig')$ is an  isomorphism of $G$ onto $G'$ which maps $\orig$ to $\orig'$. We denote by $\G_*$ the space of isomorphism classes of rooted graphs and  write $[G,\orig]$ for the equivalence class that contains $(G,\orig)$.  In the same way one defines the space $\G_{**}$ of isomorphism classes of graphs with an ordered pair of distinguished vertices. That is, $(G_1,o_1,o_2)$ and $(G_2,o'_1,o'_2)$ are isomorphic if and only if there is an isomorphism from $G_1$ onto $G_2$ which maps $o_1$ to $o_1'$ and $o_2$ to $o_2'$. The spaces $\G_{*}$ and $\G_{**}$ can be  equipped with  metrics that turn  them into separable and complete metric spaces.
 
  A Borel probability measure $\nu$ on $\G_*$ is called \emph{unimodular} if it obeys the Mass-Transport Principle (MTP): for all Borel function $f: \G_{**}\to[0,\infty]$, we have
\begin{equation}\label{eq:defMTP}
\int \sum_{x\in V} f(G,\orig,x)d\nu([G,\orig])= \int \sum_{x\in V} f(G,x,\orig) d\nu([G,\orig]).
\end{equation}
Realizations of unimodular measures are called \emph{unimodular random graphs}.

An important class of unimodular measures arises from Galton-Watson processes. The Galton-Watson tree is defined inductively: start with one vertex, the root $\ro$ of the tree. Then the number of offspring of each particle (vertex) is distributed according to $\mu$. Edges are between vertices and their offspring. We denote by \GW~the corresponding measure on the space of rooted trees.  In this
construction the root clearly plays a special role and \GW~is not unimodular.  However, if we bias the distribution such that the probability that the root has degree $k+1$ is proportional to $\frac{\mu_{k}}{k+1}$ we obtain a unimodular measure \UGW. When we use the \UGW~measure instead of the standard \GW~measure to define the family tree of the BRW we denote the BRW by UBRW.

Due to the description of the BRW as a tree-indexed random walk the unimodularity of \UGW~caries over to the trace: the trace of a UBRW on a Cayley graph is a unimodular random graph, see Theorem 3.7 in \cite{benjamini:11}.
This property makes the UBRW more natural to consider than the original BRW and we  will prove Theorem \ref{thm:main} for UBRWs. However, it is not difficult to see that it then also holds true for BRWs.

\subsection{Ends of graphs}
Consider a locally finite graph $G=(V,E)$. A \emph{ray} is a sequence $\pi=\langle x_{0},x_{1},\ldots\rangle$ of distinct vertices such that $x_{i}\sim x_{i+1}$ for all $i\geq 0$. For any  finite set $F$ of vertices  we consider its complement $G\setminus F$, which is the graph induced by the vertex set $V\setminus F$. This graph consists of finitely many connected components. Every ray $\pi$ must have all but finitely many points in exactly one component; we say that $\pi$ \emph{ends up} in that component. Two ends are  \emph{equivalent} if they end up in the same connected component for  all choices of $F$. \emph{Ends} are equivalence classes of rays and we denote by $\vartheta G$ the set of ends. Let $F$ be a finite vertex set and $C$ be some component of $G\setminus F$. We write $\vartheta C$ for the set of ends whose rays end up in $C$. The space of ends $\vartheta G$ can be equipped with a discrete topology in the following way. For any finite set $F$ and any end $w$ there is precisely one component of $G\setminus F$ whose completion contains $w$. Varying $F$ yields a neighbourhood base for $w$.  An \emph{isolated end} is an end that is isolated in this topology.

\subsection{Hyperbolic groups and random walks}
 From now on we assume that the underlying group $\Gamma$ is hyperbolic and the generating set $S$ induces a planar Cayley graph $G=G(\Gamma, S)$.  Both assumptions, hyperbolicity and planarity, are crucial.
 
Let us first collect several classic facts about hyperbolic groups and random walks; we refer to the survey \cite{Cal:13}  for an excellent introduction. 
 An elementary hyperbolic group is either finite or has two ends. 
We will focus on the case of non-elementary hyperbolic groups since  
random walks on them  are transient. 
Define the \emph{Green functions} 
\begin{equation*}
G_{r}(x,y)=\sum_{n=0}^{\infty} p^{(n)}(x,y) r^{n},\quad x,y\in\Gamma,
\end{equation*}
for all $r\leq R:=1/\rho$.  It is proved in   \cite{Go:13} that for finite range random walks on  hyperbolic groups Ancona's Inequalities hold up to the radius of  convergence: there exists some $C>0$ such that for any $x,z\in \Gamma$ and for any $y$ on a geodesic segment from $x$ to $z$ we have 
\begin{equation*}
G_{r}(x,z)\leq C G_{r}(x,y) G_{r}(y,z) \quad \forall r\in[1,R].
\end{equation*}

Symmetry of the random walk implies, see Lemma 2.1 in \cite{GoLa:13},  that 
\begin{equation*}
\lim_{d(\orig,x)\to\infty} G_{R}(\orig,x)=0.
\end{equation*}

Eventually we obtain, see Lemma 2.2 in \cite{GoLa:13},  that the Green functions of the random walk decay exponentially, that is, there exist some constants $C_{1}$ and $\varrho<1$ such that for all $x,y\in \Gamma$ 
\begin{equation}\label{eq:expdecay}
G_{R}(x,y)\leq C_{1} \varrho^{d(x,y)}.
\end{equation}

In the case of planar Cayley graphs, the Cayley $2$-complex is the $2$-complex such that the one-skeleton is given by the Cayley graph $G$ and the $2$-cells are bounded by loops in $G$. The $2$-complex is homeomorphic to the hyperbolic disc and it can be endowed with an orientation. This orientation is used implicitly at several points in the proof of Proposition \ref{prop:noisolatedend}. Moreover, the Gromov hyperbolic boundary $\partial G$ can be identified with the unit circle.

\section{Proof of Theorem \ref{thm:main}}\label{sec:proof}
The following result holds true for traces of transient BRWs on Cayley graphs (or even URGs). Its proof is an  adaption of the one for invariant percolation, see Proposition 8.33 in \cite{lyons:book}. As the arguments are short we allow us to present the details. We stick as close as possible to the notations in  \cite{lyons:book}.
\begin{prop}\label{prop:atleast3}
Consider the trace of a transient symmetric UBRW on a Cayley graph. Almost surely every  trace that has at least $3$ ends has no isolated end.
\end{prop}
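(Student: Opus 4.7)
My plan follows the mass-transport argument used for Proposition~8.33 in Lyons--Peres, transferred from the insertion-tolerant percolation setting to the unimodular random graph setting. Suppose, for contradiction, that with positive probability $\Tr$ has at least three ends and at least one isolated end. Since an isolated end is witnessed by a finite vertex cut, by countable additivity there is $k\geq 1$ for which the event $A_k$---``$\Tr$ has an isolated end whose canonical minimum separating vertex cut $F(\xi)$ has exactly $k$ vertices''---has positive probability; denote by $C_\xi$ the corresponding one-ended component of $\Tr\setminus F(\xi)$. The canonical $F(\xi)$ is taken to be the unique \emph{innermost} minimum cut, whose existence and uniqueness follow from the standard submodularity of cut sizes.

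The core step is the nonnegative Borel mass transport
\[
f(H,x,y)\;=\;\1\bigl\{\exists\text{ isolated end }\xi\text{ of }H\text{ with }|F(\xi)|=k\text{ such that }x\in C_\xi\text{ and }y\in F(\xi)\bigr\},
\]
after a canonical tie-breaking that assigns each vertex $x$ to at most one such $\xi$. A key geometric observation---two distinct isolated ends cannot yield strictly nested one-ended components $C_{\xi_1}\subsetneq C_{\xi_2}$, because any ray of $\Tr$ contained in $C_{\xi_2}$ must converge to $\xi_2$, forcing $\xi_1=\xi_2$---combined with the tie-breaking yields that the outgoing mass at $\orig$ is at most $k$. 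On the other hand, for each $y\in F(\xi)$ the incoming mass is $|C_\xi|=\infty$. The MTP \eqref{eq:defMTP} applied to $f$ therefore forces $\P(\orig\in\bigcup_\xi F(\xi))=0$.

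A final, trivial mass transport $g(H,x,y)=\1\{y\in\bigcup_\xi F(\xi)\}$ then yields
\[
\E\Bigl[\bigl|\bigcup_\xi F(\xi)\bigr|\Bigr]\;=\;\E\bigl[|V(\Tr)|\cdot\1_{\orig\in\bigcup_\xi F(\xi)}\bigr]\;=\;\infty\cdot 0\;=\;0,
\]
so $\bigcup_\xi F(\xi)=\emptyset$ almost surely, contradicting $\P(A_k)>0$. The main obstacle is producing the canonical, measurable choice of innermost cut $F(\xi)$ together with the tie-breaking $\xi(x)$ that keeps outgoing mass uniformly bounded by $k$; both points are handled by the submodularity of cut sizes and the non-nesting of one-ended components established above. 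Note that neither planarity nor hyperbolicity is used in this step, so the proposition holds for traces on arbitrary Cayley graphs.
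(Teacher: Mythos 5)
There is a genuine gap in your argument, and the symptom is already visible at the level of hypotheses: nowhere do you use the assumption that the trace has at least $3$ ends, and your argument invokes nothing about the trace beyond unimodularity. If it were correct, it would show that \emph{no} unimodular random graph has an isolated end, which is false: the bi-infinite path $\Z$, viewed as a deterministic rooted graph, satisfies the MTP and has two isolated ends. The step that breaks is precisely the ``canonical innermost minimum cut $F(\xi)$''. Submodularity yields a lattice structure on minimum cuts (and hence unique innermost or outermost elements) only for cuts separating an end from a \emph{fixed finite anchor}; with no anchor, the family of minimum cuts need not have any extremal element, and, more importantly, need not admit \emph{any} isomorphism-equivariant selection. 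In $\Z$ every singleton $\{n\}$ is a minimum cut for the right-hand end, there is no innermost (or outermost) one, and by vertex-transitivity no measurable equivariant rule can single one out --- indeed your own MTP computation (outgoing mass bounded by $k$, infinite incoming mass at the chosen cut) is exactly a proof that no such equivariant choice can exist. The same problem infects the tie-breaking $\xi(x)$: distinct isolated ends give components that are not nested, as you correctly show, but they can still overlap pairwise, a vertex may lie in many components $C_\xi$, and there is in general no invariant way to pick one; without ``at most one $\xi$ per $x$'' the outgoing mass is no longer bounded by $k$.

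The paper's proof avoids per-end choices altogether, and this is exactly where the $3$-ends hypothesis enters. For fixed $n$ it forms $A_n$, the union of \emph{all} vertex sets of diameter at most $n$ whose removal leaves at least $3$ infinite components (so $A_n\neq\emptyset$ for all large $n$ precisely because $\Tr$ has at least $3$ ends), and transports unit mass from each vertex $x$, split equally among the vertices of $A_n$ nearest to $x$ --- a rule that is automatically equivariant because it is exhaustive rather than selective. An isolated end $\xi$ then has a one-ended neighborhood $U$ disjoint from $A_n$ (a diameter-$\leq n$ set deep inside $U$ cannot cut off $3$ infinite components, since $U$ has only one end), so the infinitely many vertices of $U$ funnel their unit masses through a finite cut into a finite subset of $A_n$; some vertex receives infinite mass, and ``everything shows at the root'' turns this into a contradiction with the MTP. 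To salvage your approach you would have to anchor the cuts (for instance at the sending vertex $x$) or aggregate over all admissible cuts as the paper does; as written, the canonical $F(\xi)$ your transport requires does not exist.
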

\begin{proof}
For each $n\in \N$ let $A_{n}$ be the union of all vertex sets $A\subset \Tr$ such that  the diameter $diam(A)\leq n$ (in the metric in $\Tr$) and $\Tr\setminus A$ has at least $3$ infinite components. If $\Tr$ has at least $3$ ends then $A_{n}\neq \emptyset$ for all but finitely many $n$. We assume from now on that $\Tr$ has at least $3$ ends.

Fix some $n\in \N$.  For any vertex $x$ in $\Tr$ let 
\[C(x)=\{y\in A_{n}: d(x,y)=\min_{z\in A_{n}} d(x,z)\}\] 
be the set of vertices in $A_{n}$ that are closest to $x$ in the metric in $\Tr$. We can define the Borel function $F: \G_{**}\to[0,\infty]$:
\[
F(\Tr,x,y) = 
\begin{cases}
\frac{1}{|C(x)|} \1_{\{y\in C(x)\}}, & \textrm{if } A_n\neq \emptyset,\\
0, & \textrm{otherwise}.
\end{cases}
\]
The function $F$ is well-defined since $F(\Tr,\orig,x)$ is invariant under isomorphisms. Since the law $\nu$ of the rooted trace $[\Tr,\orig]$ is unimodular we can apply the MTP to obtain that the expected mass received by $\orig$  is at most $1$:
\begin{equation*}
\int \sum_{x\in \Tr} F(\Tr,x,\orig)d\nu([\Tr,\orig])=\int \sum_{x\in \Tr} F(\Tr,\orig,x)d\nu([\Tr,\orig])\leq 1.
\end{equation*}

Assume that $\xi$ is an isolated end of $\Tr$ and let us show that this leads to  a contradiction.  There exists some finite set  $F$ such that the connected component $U$ of $G\setminus F$ whose completion contains  $\xi$ satisfies $U\cap A_{n}=\emptyset$. Moreover, there exists a finite set of vertices $B$ such that all paths from $U$ to all other connected components of $G\setminus F$ must pass through $B$. Then a subset of  vertices of  $B$  gets all the mass from all the vertices in $U$.  As $U$  contains infinitely many vertices the set $B$ receives infinite mass. 
 Using the property that ``everything shows at the root'', see \cite[Lemma 2.3]{AL:07}, we obtain that $\int \sum_{x\in \Tr} F(\Tr,x,\orig)d\nu([\Tr,\orig])=\infty$, a contradiction. Hence, almost surely every trace with $A_{n}\neq \emptyset$ does not have isolated ends. Since this holds for all but finitely many $n\in\N$, we obtain that almost surely a trace with isolated ends can have at most two ends.
 \end{proof}

\begin{prop}\label{prop:noisolatedend}
Assume Assumption \ref{ass} holds. The trace of a transient UBRW on a planar hyperbolic Cayley graph has a.s.~no isolated end.
\end{prop}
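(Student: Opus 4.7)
The plan is to show that the trace's boundary limit set $\Lambda\subset \partial G \cong S^{1}$ is almost surely a perfect non-empty subset of $\partial G$, and then use planarity to translate ``$\Lambda$ is perfect'' into ``$\Tr$ has no isolated end.'' Combined with Proposition~\ref{prop:atleast3}, this will automatically upgrade the conclusion to continuum many ends, matching the statement of Theorem~\ref{thm:main}.

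Since $m>1$, the Galton--Watson family tree survives with positive probability, so $\Tr$ is infinite on survival; by hyperbolicity, every ray in $\Tr$ converges to a point of $\partial G$, and I write $\Lambda$ for the (random) collection of such limits, which is non-empty on survival. The technical core is to show that $\Lambda$ has no isolated point. Fix $\xi\in\Lambda$, a geodesic $\orig=x_{0},x_{1},\dots$ in $G$ converging to $\xi$, and an arbitrarily small arc $J\subset\partial G$ close to but not containing $\xi$. The target estimate is a lower bound, uniform in $n$, on the probability that, conditional on a BRW particle being present at $x_{n}$, some descendant generates a ray of $\Tr$ landing in $J$. This is precisely where the Ancona inequality at $r=R$ from \cite{Go:13} enters: it controls the $R$-Martin kernels $K_{R}(\,\cdot\,,\eta)$ for $\eta\in J$ up to multiplicative constants and identifies the $R$-Martin boundary with $\partial G$, and combined with the exponential decay \eqref{eq:expdecay} of $G_{R}$ and the branching balance encoded by $m\rho\leq 1$, it ensures that the expected number of descendant rays hitting $J$ stays bounded below. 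Iterating over a sequence of arcs shrinking to $\xi$ produces points of $\Lambda$ accumulating at $\xi$, so $\Lambda$ is perfect.

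To convert perfectness of $\Lambda$ into the absence of isolated ends, I would use planarity. Viewing $G$ as the $1$-skeleton of its Cayley $2$-complex, which is homeomorphic to the hyperbolic disc, any finite vertex cut $F\subset\Tr$ separates $\Tr\setminus F$ into finitely many components whose boundary-accumulation sets are closed subarcs of $S^{1}$, pairwise disjoint except possibly at endpoints, because the planar embedding cyclically orders the ``exits'' of each component along $\partial G$ via Jordan-curve separation. Each end of $\Tr$ thus corresponds to a nested sequence of such arcs contracting to a single point of $\Lambda$, and an isolated end of $\Tr$ would force an isolated point of $\Lambda$. Perfectness of $\Lambda$ therefore rules out isolated ends; Proposition~\ref{prop:atleast3} then promotes ``at least three ends'' (which perfectness certainly supplies) to ``continuum many ends.''

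The main obstacle will be the uniform lower bound on boundary-hitting probabilities in the second paragraph: it is precisely there that Ancona's inequality at the spectral radius, the exponential decay of $G_{R}$, and the criticality balance $m\rho\leq 1$ must combine in a quantitatively sharp way, and it is precisely there that symmetry of $q$ and hyperbolicity of $\Gamma$ are both essential; without Ancona at $R$ (only available thanks to \cite{Go:13}) the argument collapses at the critical point $m\rho=1$. The planar step is geometrically natural but requires careful bookkeeping of how ends of the subgraph $\Tr$ inherit a cyclic order from $\partial G$ via the disc model of the Cayley $2$-complex.
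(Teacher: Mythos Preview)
Your route is genuinely different from the paper's, and the central step you identify as ``the main obstacle'' is, as it stands, a real gap rather than a routine verification.

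The paper does \emph{not} try to show directly that the limit set $\Lambda$ is perfect. Instead it argues by contradiction and uses Proposition~\ref{prop:atleast3} as the engine, not merely as a final upgrade. Concretely: it fixes three geodesic rays $\gamma_{1},\gamma_{2},\gamma_{3}$ from $\orig$ to distinct boundary points, which by planarity cut the disc into three sectors $S_{12},S_{23},S_{31}$; chooses points $x_{1},x_{2},x_{3}$, one in each sector, at large distance $K$ from $\gamma=\bigcup\gamma_{i}$; and then uses the exponential decay \eqref{eq:expdecay} together with $|S(x_i,n)\cap\gamma|=O(n)$ and Markov's inequality to show $\P[\Tr_{i}\cap\gamma\neq\emptyset]\to 0$ as $K\to\infty$. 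Assuming $\P[\Tr\text{ has an isolated end}]=c>0$, one conditions (using $\mu_{1}>0$ and $q(e)>0$) on the event that at some time $N$ the only particles are singletons at $x_{1},x_{2},x_{3}$; from then on three independent BRWs run, and with positive probability each stays in its own sector and at least one produces an isolated end. Planarity then forces the full trace to have at least three ends and an isolated end, contradicting Proposition~\ref{prop:atleast3}. The only analytic input is an \emph{upper} bound on Green functions --- precisely what Ancona at $R$ delivers via \eqref{eq:expdecay}.

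Your plan, by contrast, needs a \emph{uniform lower bound} on the probability that descendants of a particle at $x_{n}$ produce a ray landing in a prescribed small arc $J$. You assert that Ancona at $R$ plus \eqref{eq:expdecay} plus $m\rho\le 1$ yields that ``the expected number of descendant rays hitting $J$ stays bounded below,'' but (i) Ancona's inequality is a multiplicative \emph{upper} bound on $G_{r}(x,z)$, not a lower bound, so it does not by itself control hitting probabilities from below; (ii) even granting a first-moment lower bound, you would still need a second-moment or Paley--Zygmund step to convert it into a positive probability, and at criticality $m\rho=1$ the second moment of the number of boundary-hitting rays is typically infinite, so this is delicate; (iii) the quantity ``number of descendant rays landing in $J$'' is not even clearly finite or well-posed without further work. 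None of this is supplied, and it is not clear that the program can be completed without substantially more machinery than the paper uses.

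A secondary point: your claim that an isolated end of $\Tr$ forces an isolated point of $\Lambda$ is plausible in the planar setting but is not automatic. Distinct ends of $\Tr$ could in principle share a boundary limit, and finitely many complementary components of $\Tr\setminus F$ could have limit arcs whose closures touch the putative isolated limit point; you would need to rule this out carefully. The paper sidesteps this entirely by never introducing $\Lambda$.
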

\begin{proof}
We start with some preparations. Since the Gromov boundary $\partial G$ can be identified with a circle there exist infinite geodesics  $\gamma_{1},\gamma_{2}$ and $\gamma_{3}$ starting from $\orig$ 
with distinct boundary points $\xi_{1},\xi_{2}$ and $\xi_{3}$. 
Denote by $\gamma=\bigcup_{i=1}^{3}\{\gamma_{i}\}$, where $\{\gamma_{i}\}$ is the set of vertices in $\gamma_{i}$.  We define $\partial G_{ij}$ to be the part of the Gromov boundary that is between $\xi_{i}$ and $\xi_{j}$ and $S_{ij}$ to be the set of vertices between $\gamma_{i}$ and $\gamma_{j}$. Let $K$ be some large positive constant to be chosen later. Geodesics in hyperbolic groups either converge to the same boundary point or diverge exponentially.  Hence,  for $K$ sufficiently large there exist  $x_{1}\in S_{12}, x_{2}\in S_{23}$, and $x_{3}\in S_{31}$ such that $d(x_{i},\gamma)=K$ for all $i\in\{1,2,3\}$. 

Let $i\in\{1,2,3\}$ and consider  the sphere $S(x_{i},n)$ of radius $n$ around $x_{i}$ in the Cayley graph metric. Since the $\gamma_{i}$'s are geodesics and triangles are thin we have that there exists some constant $C_{2}$ such that $|S(x_{i},n)\cap \gamma| \leq C_{2} n$. Now, start a BRW in $x_{i}$ and denote by $\Tr_{i}$ the trace of the BRW started in $x_{i}$.  Using Markov's Inequality and Inequality (\ref{eq:expdecay}) we obtain that
\begin{equation}\label{eq:tozero}
\P[|\Tr_{i}\cap \gamma| \geq 1]\leq \E[|\Tr_{i}\cap \gamma| ] \leq  \sum_{y\in \gamma} G_{m}(x,y)\leq  \sum_{n\geq K}C_{1} C_{2} n \varrho^{n},
\end{equation}
which tends to $0$ as $K$ tends to infinity.

We assume now that $\P[\Tr \mbox{ has an isolated end}]=c>0$ and show that this yields a contradiction. Inequality (\ref{eq:tozero}) allows us to choose $K$ sufficiently large such that for $i\in\{1,2,3\}$
\begin{equation*}
\P[\Tr_{i} \mbox{ has an isolated end}, |\Tr_{i}\cap \gamma|=0 ]>0.
\end{equation*}
We start the UBRW in $\orig$ and condition the UBRW on the event that at time $N=\max\{d(\orig, x_{i}): i\in\{1,2,3\}\}$ in each of the vertices $x_{1},x_{2}$ and $x_{3}$ there is exactly one particle and no particle elsewhere. This is possible since we assume that  $\mu_{1}>0$ and $q(e)>0$. So what happens at times $n\geq N$ has the same distribution as we start three independent BRWs in $x_{1},x_{2}$, and $x_{3}$.  
Eventually, using the planarity of $G$ we get that $\Tr$ has with positive
 probability at least three
 distinct ends including at least one isolated end, which yields a contradiction to Proposition \ref{prop:atleast3}.
\end{proof}
It remains to show that the trace has a.s.~continuum many ends.
\begin{cor}
Assume Assumption \ref{ass} holds. The trace of a transient UBRW on a planar hyperbolic Cayley graph has a.s.~continuum many ends.
\end{cor}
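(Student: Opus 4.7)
The plan is to deduce the corollary from Propositions~\ref{prop:atleast3} and \ref{prop:noisolatedend} together with a standard topological fact about the end space of a locally finite graph. First, I would record that the trace $\Tr$ is almost surely infinite: since $\mu_{0}=0$ and $m>1$ the family tree is a.s.\ infinite, and together with transience of the random walk this produces infinitely many visited vertices, so $\Tr$ contains at least one end.

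Next, I would use Proposition~\ref{prop:noisolatedend} to rule out any finite positive number of ends. If $\Tr$ had exactly $k$ ends with $1 \le k < \infty$, then in the end-space topology described in Section~\ref{sec:preliminaries} (which is Hausdorff) the space $\vartheta \Tr$ would consist of $k$ isolated points, so $\Tr$ would have an isolated end, contradicting Proposition~\ref{prop:noisolatedend}. Combined with the previous paragraph this forces $\Tr$ to have infinitely many ends almost surely.

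Finally, I would invoke a classical topological fact: for any locally finite graph, the end space is compact, Hausdorff, second countable and totally disconnected. Under the conditions established above it is moreover nonempty and perfect, since Proposition~\ref{prop:noisolatedend} gives that a.s.\ no point of $\vartheta \Tr$ is isolated. By Brouwer's characterization of the Cantor set, every nonempty perfect compact metrizable totally disconnected space is homeomorphic to the standard Cantor set and hence has cardinality of the continuum; this yields the corollary.

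I do not expect any serious obstacle, since all the hard probabilistic work has already been done in the two preceding propositions and what remains is essentially topology. The only point that requires a brief verification is that in the natural end-space topology a finite end set is automatically discrete, which follows from Hausdorffness; once this is noted, the reduction to the Cantor-set characterization is immediate.
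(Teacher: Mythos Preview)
Your argument is correct and follows the same overall outline as the paper: use Proposition~\ref{prop:noisolatedend} to exclude a finite nonzero number of ends (since in a Hausdorff end space a finite set is discrete), and then pass from ``infinitely many ends with none isolated'' to ``continuum many ends''. The only difference lies in how the last step is executed. The paper argues combinatorially: since no end is isolated, every infinite component of $\Tr\setminus B(\orig,n)$ contains at least two ends, and iterating over $n$ produces a binary splitting that yields at least $|2^{\N}|$ ends (bounded degree caps the cardinality from above). You instead invoke Brouwer's theorem, using that the end space of a connected locally finite graph is compact, metrizable and totally disconnected, hence homeomorphic to the Cantor set once it is nonempty and perfect. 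Your route is slightly slicker and identifies the end space up to homeomorphism, while the paper's direct argument is more self-contained and avoids citing Brouwer. One small remark: you list Proposition~\ref{prop:atleast3} as an input, but your proof only uses Proposition~\ref{prop:noisolatedend} (which already absorbs Proposition~\ref{prop:atleast3}).
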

\begin{proof}
Due to Proposition \ref{prop:noisolatedend} the trace $\Tr$ must have infinitely many
ends because otherwise each end would be isolated. Moreover,  each infinite connected component of $\Tr \setminus B(\orig,n)$ must contain at least two ends; otherwise, such a component would contain an isolated end. Thus, the number of ends is at least of order $|2^{\mathbb{N}}|$, which proves the claim since $\Tr$ is of bounded degree.
\end{proof}

\subsection*{Acknowledgment}
The authors thank Elisabetta Candellero and Matthew Roberts for comments on a first version of this note. The research was supported by the exchange programme Amadeus-Amad\'ee $31473$TF.
\bibliographystyle{plain}
\bibliography{bib}
\bigskip

\noindent\begin{minipage}{0.48\textwidth}
Lorenz A.~Gilch\newline
Dep. of Math. Structure Theory,\newline
Graz University of Technology\newline
Steyrergasse 30,\newline
8010 Graz, Austria\newline
\texttt{gilch@TUGraz.at}
\end{minipage}
\hfill
\begin{minipage}{0.48\textwidth}
Sebastian M\"{u}ller\newline
Aix Marseille Universit\'e\newline
CNRS  Centrale Marseille\newline 
I2M\newline
UMR 7373\newline 13453 Marseille France\newline
\texttt{mueller@cmi.univ-mrs.fr}
\end{minipage}

\end{document}